\newtheorem{theorem}{Theorem}[section]
\newtheorem*{theorem*}{Theorem}
\newtheorem{corollary}[theorem]{Corollary}
\newtheorem{proposition}[theorem]{Proposition}
\newtheorem{lemma}[theorem]{Lemma}
\theoremstyle{definition}
\newtheorem{definition}[theorem]{Definition}
\newtheorem{example}[theorem]{Example}
\newtheorem{remark}[theorem]{Remark}
\newtheorem{question}[theorem]{Question}
\DeclareMathOperator{\N}{N}
\DeclareMathOperator{\V}{V}
\DeclareMathOperator{\C}{C}
\DeclareMathOperator{\K}{K}
\DeclareMathOperator{\dist}{d}
\DeclareMathOperator{\PSL}{PSL}
\DeclareMathOperator{\PGL}{PGL}
\DeclareMathOperator{\Sym}{Sym}
\begin{document}

\title{Parameters for certain locally-regular graphs}
\author{Marston Conder, Jeroen Schillewaert and Gabriel Verret}
\address{{Department of Mathematics, University of Auckland, 38 Princes Street, 1010 Auckland, New Zealand. AMS code: 05C99; Keywords: locally regular graphs}} 
\email{m.conder@auckland.ac.nz, j.schillewaert@auckland.ac.nz, g.verret@auckland.ac.nz}
\date{}

\maketitle

\begin{abstract}
A graph is called $(k,t)$-regular if it is $k$-regular and the induced subgraph on the neighbourhood of every vertex is $t$-regular. We find new conditions on $(k,t)$ for the existence of such graphs
and provide a wide range of examples.

\end{abstract}

%%%%%%%%%%%
\section{Introduction}
%%%%%%%%%%%

Regular graphs are ubiquitous and well-studied objects in combinatorics. A slightly unusual way to define regularity goes as follows. Given a graph $G$ and a vertex $v$ of $G$, the \emph{local graph at $v$} is the subgraph of $G$ induced on the neighbourhood  $\N(v)$ of $v$, and is denoted by $G_v$. A simple graph is then \emph{$k$-regular} if all its local graphs have order $k$. 
(Note: All graphs in this paper are simple. The graphs may be infinite, but the valency $k$ will always be finite.)
 
The above concepts can be taken further quite naturally by asking that the local graphs themselves be regular, and that all have the same valency. 
A graph is \emph{locally-regular} all its local graphs are regular, and is $(k,t)$-\emph{regular} if it is $k$-regular and all its local graphs are $t$-regular.   
Such graphs are the topic of our paper. More specifically, we would like to address the following question. 

\begin{question}\label{BigQuestion}
For which pairs $(k,t)$ does there exist a $(k,t)$-regular graph?
\end{question}

Our interest was drawn to this question (and more generally, to ``$(a_1,\dots,a_n)$-regular'' graphs)  following some recent work on expander graphs and PCP theory, in \cite{CLP} followed by \cite{CLST}. It is a very natural and basic question, and yet does not seem to be trivial and not much seems to be known about it. 
In fact, the only contribution to this topic prior to~\cite{CLP} that we are 
aware of was by Zelinka~\cite{Zelinka}, who pointed out some basic necessary conditions 
for existence of $(k,t)$-regular graphs, constructed a large range of examples using some basic graph products, and then ruled out the existence of $(7,4)$-regular graphs in a somewhat \emph{ad hoc} manner.

In Section \ref{sec:BS} we collect some basic facts which we then use in Section \ref{sec:fix-t-large-k} to obtain new information about $(k,t)$-regular graphs for large $k$ with respect to $t$,  slightly improving what was achieved in \cite{Zelinka}. 
In Section \ref{sec:FC} we present our main findings, namely for large  $k$ and $t$ with respect to $k-t$, 
and in particular, we prove the following theorem, which shows there are no $(k,t)$-regular graphs for infinitely many such pairs, including $(7,4)$.

\begin{theorem}\label{prop:diagonal}
Let $d\geq 1$ and let $G$ be a connected $(k,k-d)$-regular graph. 
If $k\geq d^2-d+1$, then $d$ divides $k$, and $G$ is isomorphic to the complete multipartite graph $\K_{\frac{k+d}{d}}[\overline{\K_{d}}]$.  
\end{theorem}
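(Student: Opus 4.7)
I aim to show $G \cong \K_{(k+d)/d}[\overline{\K_{d}}]$ by proving that ``having the same neighborhood'' partitions $\V(G)$ into independent sets of size $d$. For a vertex $v$ write $D(v) = \V(G) \setminus (\N(v) \cup \{v\})$ for its non-neighbors, and set $\mu(v,w) = |\N(v) \cap \N(w)|$.

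The first ingredient is the identity
\[
\sum_{w \in D(v)} \mu(v,w) = k(d-1),
\]
obtained by double-counting pairs $(u,w)$ with $u \in \N(v)$ and $w \in \N(u) \cap D(v)$, and using that each $u \in \N(v)$ has exactly $d-1$ non-neighbors of $v$ in its local graph $G_u$. Since $\mu(v,w) \leq k$, this already forces $|D(v)| \geq d-1$, with equality iff $\N(v) = \N(w)$ for every $w \in D(v)$. Dually, each $w \in D(v)$ has $k - \mu(v,w)$ neighbors inside $D(v)$, which gives
\[
|E(G[D(v)])| = \frac{k\,(|D(v)| - d + 1)}{2}.
\]

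The second ingredient is the resulting edge bound: setting $s = |D(v)|$ and applying $|E(G[D(v)])| \leq \binom{s}{2}$ yields $k(s-d+1) \leq s(s-1)$ for $s \geq d$. Taking $s = d$ gives $k \leq d(d-1)$, directly contradicting the hypothesis $k \geq d^2 - d + 1$. A quadratic analysis of $s^2 - (k+1)s + k(d-1) \geq 0$ shows that under $k \geq d^2-d+1$ the only integers $s \geq d-1$ satisfying this are $s = d-1$ and $s \geq k - d + 2$. The main remaining obstacle is ruling out the tail alternative $|D(v)| \geq k - d + 2$; to close this gap I would exploit that $G[D(v)]$ is an induced subgraph of the $(k,k-d)$-regular graph $G$, so that each edge of $G[D(v)]$ has exactly $k - d$ common neighbors in $G$, and each $w \in D(v)$ has a $(k-d)$-regular local graph $G_w$ that meets $\N(v)$ in $\mu(v,w)$ vertices and $D(v)$ in $k - \mu(v,w)$ vertices. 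These coupled local constraints should rule out such a large $D(v)$.

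Once $|D(v)| = d-1$ is established for every $v$, the sum identity forces $\mu(v,w) = k$, hence $\N(v) = \N(w)$, for every $w \in D(v)$. Then ``having the same neighborhood or being equal'' is an equivalence relation whose classes are independent sets of common size $d$, and any two distinct classes are completely joined by edges (otherwise two non-adjacent vertices from distinct classes would be twins, merging the classes). This identifies $G$ with $\K_{(k+d)/d}[\overline{\K_{d}}]$, and in particular forces $d \mid k$. The hardest step is the exclusion of $|D(v)| \geq k-d+2$, which goes beyond the immediate quadratic edge bound and requires using the local regularity at vertices of $D(v)$.
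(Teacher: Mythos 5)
There is a genuine gap, and it sits exactly at the step you flag yourself: ruling out the alternative $|D(v)|\geq k-d+2$. Everything up to that point is correct (the identity $\sum_{w\in D(v)}\mu(v,w)=k(d-1)$, the edge count $|E(G[D(v)])|=\tfrac{k(|D(v)|-d+1)}{2}$, the quadratic dichotomy $|D(v)|=d-1$ or $|D(v)|\geq k-d+2$, and the endgame deducing the twin classes of size $d$ and the structure $\K_{(k+d)/d}[\overline{\K_d}]$ once $|D(v)|=d-1$ holds for every $v$). But the sentence ``these coupled local constraints should rule out such a large $D(v)$'' is a hope, not an argument, and it is the hard part of the theorem. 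The constraints you list (each edge lies in $k-d$ triangles, each $w\in D(v)$ splits its $k$ neighbours as $\mu(v,w)$ in $\N(v)$ and $k-\mu(v,w)$ in $D(v)$) are generic consequences of $(k,k-d)$-regularity and are equally satisfied by graphs in which every vertex does have a large set of non-neighbours -- for example the icosahedron, which is $(5,2)$-regular (so $d=3$, $|D(v)|=6\geq k-d+2$). So any successful exclusion of the tail case must use the hypothesis $k\geq d^2-d+1$ in a quantitative and, as far as anyone knows, global way; nothing in your sketch indicates how the hypothesis would enter beyond the middle-range quadratic computation already used.

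For comparison, the paper's proof handles precisely this point with a connectivity-type mechanism rather than local counting. For a vertex $x$, the complement $\overline{G_x}$ of the local graph is $(d-1)$-regular of order $k>(d-1)^2+(d-1)$; a lemma counting paths of length at most $2$ across a vertex partition shows that any $(d-1)$-regular graph of order greater than $(d-1)^2+(d-1)$ is ``far-connected'' (any two vertices are joined by a chain of pairs at $\overline{G_x}$-distance at least $3$). A Venn-diagram corollary shows that whenever $y,z\in\N(x)$ satisfy $\N(x)\subseteq\N(y)\cup\N(z)$ (equivalently $\dist_{\overline{G_x}}(y,z)\geq 3$), one has $\N(y)-\N(x)=\N(z)-\N(x)$; iterating this along the chain shows $\N(y)-\N(x)$ is the same set for all $y\in\N(x)$, which identifies exactly $d$ mutual twins at every vertex and yields the lexicographic-product structure directly -- this last step is where your endgame coincides with the paper's. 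To complete your write-up you would need either an argument of comparable global strength to exclude $|D(v)|\geq k-d+2$, or to adopt something like the far-connectedness propagation; as it stands the proposal does not prove the theorem.
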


Next, in
Section \ref{sec:smallcases}, we summarise our knowledge regarding Question~\ref{BigQuestion} for small values of $k$ and $t$, and we conclude the paper in Section \ref{sec:conclusion} by discussing possible future avenues of research.

%%%%%%%%%%%%%%%%%%%%%%%%%%
\section{Basic necessary conditions and constructions}\label{sec:BS}
%%%%%%%%%%%%%%%%%%%%%%%%%%

Let $G$ be a $(k,t)$-regular graph,  
so that all of its local graphs have order $k$ and are $t$-regular. Yet another way of expressing this is that every vertex is contained in $k$ edges and every edge is contained in $t$ triangles ($3$-cycles).

Note that a graph is $(k,t)$-regular if and only if all its connected components are $(k,t)$-regular. One can therefore restrict oneself to the connected case without loss of generality, which we often do.

Next, we present a few obvious necessary conditions, some of which were already observed by Zelinka in~\cite{Zelinka}. To avoid trivialities  when considering $(k,t)$-regular graphs, from now on we will always assume that $k\geq 1$.

\begin{proposition}\label{Prop:Basic}
If there exists a $(k,t)$-regular graph, then $k\geq t+1$ and $kt$ is even, so at least one of $k$ and $t$ is even.
\end{proposition}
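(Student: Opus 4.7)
The plan is to extract both conclusions from a single application of elementary degree constraints to one local graph. Fix any vertex $v$ of a $(k,t)$-regular graph $G$. By definition, the local graph $G_v$ is the induced subgraph on $\N(v)$, so its vertex set has size exactly $k$, and by hypothesis it is $t$-regular. Both required inequalities then fall out of standard facts about regular graphs applied to $G_v$ alone.

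For the first conclusion, note that any vertex in a graph on $k$ vertices has at most $k-1$ neighbours in that graph. Since every vertex of $G_v$ has exactly $t$ neighbours in $G_v$, this yields $t \leq k-1$, equivalently $k \geq t+1$.

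For the second conclusion, apply the handshaking lemma to $G_v$: the sum of the degrees in $G_v$ equals $kt$ (there are $k$ vertices, each of degree $t$), and this must equal $2|E(G_v)|$. Hence $kt$ is even, which forces at least one of $k$ and $t$ to be even.

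There is no real obstacle here, since the statement is a direct bookkeeping consequence of the definition of $(k,t)$-regularity once one notices that the local graph itself is simply a $t$-regular graph on $k$ vertices. I would simply record the two observations above and conclude.
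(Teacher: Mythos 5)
Your proof is correct and takes exactly the same route as the paper's: both conclusions are read off from a single local graph, with $k \geq t+1$ coming from the fact that a $t$-regular graph on $k$ vertices satisfies $t \leq k-1$, and the parity of $kt$ coming from the Handshaking Lemma applied to that (finite) local graph. Nothing is missing.
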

\begin{proof}
Since the local graphs have order $k$ and valency $t$, we must have $k\geq t+1$, while the second part follows from the Handshaking Lemma. 
\end{proof}

Next, we observe how three well-known graph products can used to produce new examples from old ones. See~\cite{IK} for definitions of these graph products. 
The proofs are all very straightforward, so are omitted.

\begin{proposition}\label{Prop:CartesianProduct}
If $G$ is $(k,t)$-regular and $H$ is $(\ell,t)$-regular, then the Cartesian product $G \,\square\, H$ is $(k+\ell,t)$-regular. 
\end{proposition}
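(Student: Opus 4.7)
The plan is to unpack the definition of the Cartesian product directly and compute what the local graph at an arbitrary vertex looks like. Recall that $V(G\,\square\, H)=V(G)\times V(H)$, with $(g_1,h_1)$ adjacent to $(g_2,h_2)$ if and only if either $g_1=g_2$ and $h_1h_2\in E(H)$, or $h_1=h_2$ and $g_1g_2\in E(G)$. Fix a vertex $(g,h)$. Its neighbourhood partitions naturally into a ``$G$-part'' $\{(g',h):g'\in \N_G(g)\}$ of size $k$ and an ``$H$-part'' $\{(g,h'):h'\in \N_H(h)\}$ of size $\ell$. These two sets are disjoint (since $g'\ne g$ and $h'\ne h$ for neighbours), so $G\,\square\, H$ is $(k+\ell)$-regular.

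Next I would describe the induced subgraph on this neighbourhood. The map $g'\mapsto (g',h)$ is an isomorphism between $G_g$ and the subgraph induced on the $G$-part, since adjacency $(g_1',h)\sim (g_2',h)$ in $G\,\square\, H$ is equivalent to $g_1'\sim g_2'$ in $G$. Similarly, the subgraph induced on the $H$-part is isomorphic to $H_h$. The key (and only nontrivial) observation is that there are \emph{no} edges between the two parts: a $G$-type neighbour $(g',h)$ and an $H$-type neighbour $(g,h')$ with $g'\ne g$ and $h'\ne h$ fail both adjacency conditions defining the Cartesian product. Hence the local graph at $(g,h)$ is the disjoint union $G_g\sqcup H_h$.

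Since $G$ is $(k,t)$-regular, $G_g$ is $t$-regular of order $k$; since $H$ is $(\ell,t)$-regular, $H_h$ is $t$-regular of order $\ell$. Their disjoint union is therefore $t$-regular of order $k+\ell$. This confirms that $G\,\square\, H$ is $(k+\ell,t)$-regular, as required.

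There is no real obstacle here: the whole argument is essentially a one-line verification that cross edges between the two halves of the neighbourhood do not exist, together with bookkeeping of valencies. That is presumably why the authors chose to omit the proof.
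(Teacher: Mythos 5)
Your proof is correct and is exactly the straightforward verification the authors had in mind when they omitted the proof: the local graph at $(g,h)$ is the disjoint union $G_g \sqcup H_h$ because there are no cross edges between the two parts of the neighbourhood, and a disjoint union of $t$-regular graphs is $t$-regular. Nothing further is needed.
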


\begin{proposition}\label{Prop:LexProduct}
If $G$ is $(k,t)$-regular, then the lexicographic product $G[\overline{\K_m}]$ of $G$ with the edgeless graph $\overline{\K_m}$ is $(km,tm)$-regular.
\end{proposition}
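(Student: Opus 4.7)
The plan is to unwind the definition of the lexicographic product with an edgeless second factor, observe that adjacency is determined purely by the first coordinate, and then read off both the valency and the local valency by direct counting. Concretely, write $H = \overline{\K_m}$ and denote a typical vertex of $G[H]$ by $(u,i)$ with $u \in \V(G)$ and $i \in \V(H)$. By definition of the lexicographic product, $(u,i)$ is adjacent to $(u',i')$ if and only if either $u$ is adjacent to $u'$ in $G$, or $u = u'$ and $i$ is adjacent to $i'$ in $H$. Since $H$ has no edges, the second clause never holds, so adjacency in $G[H]$ reduces to $u \sim u'$ in $G$, and the neighbourhood of $(u,i)$ is $\{(u',j) : u' \in \N_G(u),\ j \in \V(H)\}$, which has size $km$. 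Hence $G[H]$ is $km$-regular.

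Next, I would determine the valency of the local graph at $(u,i)$. Pick a neighbour $(u',j)$ of $(u,i)$; its neighbours within this local graph are exactly the vertices $(u'',j')$ where $u''$ is adjacent to both $u$ and $u'$ in $G$ and $j'$ is arbitrary in $\V(H)$. The set of such $u''$ is precisely the neighbourhood of $u'$ inside the local graph $G_u$, which has size $t$ by the hypothesis that $G$ is $(k,t)$-regular, while $j'$ ranges freely over $m$ values. Therefore $(u',j)$ has $tm$ neighbours in the local graph at $(u,i)$, which is thus $tm$-regular, so $G[H]$ is $(km,tm)$-regular as claimed.

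There is no serious obstacle here: once one notes that making the second factor edgeless collapses adjacency in $G[H]$ to adjacency of first coordinates, the argument is pure bookkeeping, consistent with the authors' decision to omit the proof. The only point that needs a little care is tracking the two clauses of the lexicographic product definition and verifying that the $u = u'$ clause contributes nothing when $H$ is edgeless, which is exactly what guarantees that no intra-fibre edges appear inside local graphs and hence that the local valency is $tm$ rather than something larger.
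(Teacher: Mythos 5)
Your proof is correct and is exactly the straightforward verification the authors had in mind when they omitted the proof: with an edgeless second factor, adjacency in $G[\overline{\K_m}]$ depends only on first coordinates, so both the valency and the local valency simply get multiplied by $m$. No issues.
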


\begin{proposition}\label{Prop:TensorProduct}
If $G$ is $(k,s)$-regular and $H$ is $(\ell,t)$-regular, then the tensor product $G \times H$ is $(k\ell,st)$-regular.   
\end{proposition}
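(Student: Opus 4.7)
The plan is to verify the two defining properties of $(k\ell,st)$-regularity directly from the definition of the tensor product, exploiting the product structure of the adjacency relation. Recall that in $G\times H$ the vertex set is $\V(G)\times \V(H)$, and $(u_1,v_1)$ is adjacent to $(u_2,v_2)$ precisely when $u_1\sim u_2$ in $G$ and $v_1\sim v_2$ in $H$. In particular, the neighbourhood of $(u,v)$ in $G\times H$ factors as $\N_G(u)\times \N_H(v)$.

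I would first establish the global valency. Since $|\N_G(u)\times \N_H(v)|=k\ell$, the graph $G\times H$ is $k\ell$-regular. This is just the product formula applied to the factored neighbourhood.

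The key step is to show that the local graph $(G\times H)_{(u,v)}$ is $st$-regular. Fix a neighbour $(u_1,v_1)$ of $(u,v)$, so $u_1\in\N_G(u)$ and $v_1\in\N_H(v)$. I want to count the neighbours of $(u_1,v_1)$ inside $\N_G(u)\times \N_H(v)$. By the product form of adjacency, $(u_2,v_2)$ qualifies if and only if $u_2\in \N_G(u)\cap \N_G(u_1)$ and $v_2\in \N_H(v)\cap \N_H(v_1)$. Now $|\N_G(u)\cap \N_G(u_1)|$ is precisely the valency of $u_1$ inside the local graph $G_u$, which equals $s$ because $G$ is $(k,s)$-regular and $u_1\in \N_G(u)$; by the same reasoning applied to $H$, $|\N_H(v)\cap \N_H(v_1)|=t$. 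Multiplying gives $st$, as required.

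Strictly speaking there is no obstacle here: the argument is essentially bookkeeping, and the only thing to be careful about is making sure that the common neighbourhoods in each factor are taken inside the \emph{neighbourhood} of the base point (rather than, say, the whole graph), so that the regularity hypothesis on the local graphs of $G$ and $H$ can be applied. Once one writes neighbourhoods of $(u,v)$ as products, everything follows by a single multiplication, which explains why the authors omit the proof.
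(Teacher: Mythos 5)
Your proof is correct, and it is exactly the straightforward computation the authors have in mind when they omit the proof: the neighbourhood of $(u,v)$ factors as $\N_G(u)\times\N_H(v)$, and the common neighbourhood of $(u,v)$ and $(u_1,v_1)$ factors as $(\N_G(u)\cap\N_G(u_1))\times(\N_H(v)\cap\N_H(v_1))$, giving $k\ell$ and $st$ respectively. Nothing is missing.
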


%%%%%%%%%%%%%%%%%%%%%%%%%%%%%%%%%%%%%%%%%
\section{Large  $k$ with respect to $t$}\label{sec:fix-t-large-k}
%%%%%%%%%%%%%%%%%%%%%%%%%%%%%%%%%%%%%%%%%

\begin{example}\label{Ex:Complete}
The complete graph $\K_{k+1}$ is $(k,k-1)$-regular for all $k \ge 2$.
\end{example}

In fact it is easy to see that $\K_{k+1}$ is the only connected  $(k,k-1)$-regular graph; this also follows from Theorem~\ref{prop:diagonal}.

Next, one can deal with a surprisingly large number of pairs $(k,t)$ simply by using only the complete graphs and iterating the product operations from Section~\ref{sec:BS}. 
In fact, for fixed $t$ this produces $(k,t)$-regular graphs for all sufficiently large $k$, apart from those forbidden by the parity condition in Proposition~\ref{Prop:CartesianProduct}, as we now show. 
(Note: The bounds we obtain are slight improvements over those in \cite{Zelinka}, and the proofs are also more streamlined.)
\smallskip

\begin{proposition}\label{Prop:EvenT}
If $t\geq 0$ is even and $k\geq t^2+t$, then there exists a $(k,t)$-regular graph.
\end{proposition}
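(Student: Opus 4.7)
The plan is to realise every admissible $k$ as a non-negative integer combination $a(t+1)+b(t+2)$ and then to build a corresponding $(k,t)$-regular graph by iterating Cartesian products of two small examples, one with first parameter $t+1$ and one with first parameter $t+2$.

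For the first building block, Example~\ref{Ex:Complete} gives $\K_{t+2}$, which is $(t+1,t)$-regular. For the second, I would use the lexicographic product $\K_{t/2+2}[\overline{\K_2}]$: since $t$ is even, $\K_{t/2+2}$ is a well-defined complete graph and is $(t/2+1,t/2)$-regular by Example~\ref{Ex:Complete}, and then Proposition~\ref{Prop:LexProduct} yields that $\K_{t/2+2}[\overline{\K_2}]$ is $(t+2,t)$-regular. This is precisely where the parity hypothesis on $t$ enters.

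Having $(t+1,t)$- and $(t+2,t)$-regular graphs in hand, Proposition~\ref{Prop:CartesianProduct} (which preserves the second parameter and adds the first) implies that an iterated Cartesian product of $a$ copies of $\K_{t+2}$ with $b$ copies of $\K_{t/2+2}[\overline{\K_2}]$ is $(a(t+1)+b(t+2),t)$-regular, for any non-negative integers $a,b$ not both zero.

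The final step is a Chicken McNugget computation: since $\gcd(t+1,t+2)=1$, the Frobenius number of this pair is $(t+1)(t+2)-(t+1)-(t+2)=t^2+t-1$, so every integer $k\geq t^2+t$ can be written as $a(t+1)+b(t+2)$ with $a,b\geq 0$. This matches the hypothesis of the proposition exactly. There is no serious obstacle here; the only genuine design choice is picking the two building blocks so that the resulting Frobenius bound comes out as sharp as $t^2+t$ rather than something larger.
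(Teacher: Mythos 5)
Your proof is correct and takes essentially the same route as the paper's: both use $\K_{t+2}$ and the lexicographic product $\K_{t/2+2}[\overline{\K_2}]$ as the $(t+1,t)$- and $(t+2,t)$-regular building blocks, combine them via iterated Cartesian products, and finish with the Frobenius bound for the coprime pair $(t+1,t+2)$. If anything, your statement that $a,b$ range over non-negative integers not both zero is slightly more careful than the paper's phrasing with ``positive integer coefficients'', since the bound $t^2+t$ is the one for non-negative representations.
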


\begin{proof}
By Example~\ref{Ex:Complete}, there exist $(t+1,t)-$ and $(\frac{t}{2}+1,\frac{t}{2})$-regular graphs. Using Proposition~\ref{Prop:LexProduct}, there exists a $(t+2,t)$-regular graph. It now follows from Proposition~\ref{Prop:CartesianProduct} that if $k$ can be written as a linear combination of $t+1$ and $t+2$ with positive integer coefficients, 
then there exists a $(k,t)$-regular graph. Finally, since $\gcd(t+1,t+2)=1$,  the solution to the Frobenius coin problem \cite{NW} tells us that this happens for all $k\geq (t+1)(t+2)-(t+1)-(t+2)+1=t^2+t$.   
\end{proof}
\vskip 4pt

\begin{proposition}
If $t\geq 1$ is odd and  $k\geq (t-1)^2$ is even, then there exists a $(k,t)$-regular graph. 
\end{proposition}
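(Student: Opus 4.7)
The plan is to mimic the strategy of the preceding proposition: exhibit two $(k_0,t)$-regular building blocks whose valencies have suitable arithmetic, and then combine them via Cartesian products. The minor added difficulty compared to Proposition~\ref{Prop:EvenT} is that the lex-product trick used there (blowing up $K_{t/2+2}$) is not directly available, since $t/2$ is not an integer when $t$ is odd.

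For the first block I would take $K_{t+2}$, which is $(t+1,t)$-regular by Example~\ref{Ex:Complete}; note that $t+1$ is even since $t$ is odd. For the second block I would instead start from $K_3$, which is $(2,1)$-regular by the same example, and apply Proposition~\ref{Prop:LexProduct} with $m=t$; this produces a $(2t,t)$-regular graph, namely the complete tripartite graph $K_3[\overline{\K_t}]=K_{t,t,t}$.

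By iterated application of Proposition~\ref{Prop:CartesianProduct} I then obtain, for every pair of nonnegative integers $a,b$ with $a+b\geq 1$, an $(a(t+1)+b(2t),t)$-regular graph. So it suffices to show that every even $k\geq (t-1)^2$ can be written as $k=a(t+1)+b(2t)$ with $a,b\geq 0$. Writing $k=2n$, this becomes $n=a\cdot\tfrac{t+1}{2}+b\cdot t$. Since $2\cdot\tfrac{t+1}{2}-t=1$, the integers $\tfrac{t+1}{2}$ and $t$ are coprime, so the Frobenius coin problem guarantees that every integer $n\geq\bigl(\tfrac{t+1}{2}-1\bigr)(t-1)=\tfrac{(t-1)^2}{2}$ is representable in this form, whence every even $k\geq (t-1)^2$ is representable in the required form.

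The only real decision is the choice of the second building block; once $K_{t,t,t}$ is in hand, the rest is a routine Frobenius calculation of exactly the same flavour as the one in Proposition~\ref{Prop:EvenT}, and I anticipate no serious obstacle.
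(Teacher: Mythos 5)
Your proposal is correct and follows essentially the same route as the paper: both use $\K_{t+2}$ as a $(t+1,t)$-regular graph and the lexicographic blow-up of $\K_3$ as a $(2t,t)$-regular graph, then combine them by Cartesian products and invoke the Frobenius coin problem (the paper phrases the arithmetic via $\gcd(t+1,2t)=2$ rather than dividing through by $2$, but the bound $(t-1)^2$ comes out identically).
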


\begin{proof}
By Example~\ref{Ex:Complete}, there exists a $(t+1,t)$-regular graph, and also there exists a $(2,1)$-regular graph. Using Proposition~\ref{Prop:LexProduct}, this implies that there exists a $(2t,t)$-regular graph. Note also that $\gcd(t+1,2t)=2$, and hence that every sufficiently large even integer can be expressed as a linear combination of $t+1$ and $2t$ with positive coefficients. In fact, by the solution to the Frobenius coin problem, this happens for all $k\geq (t-1)^2$.  
\end{proof}
\vskip 4pt

\begin{remark}
The above two propositions do not cover all cases that can be obtained using a combination of Example~\ref{Ex:Complete} and the constructions from Section~\ref{sec:BS}.
For example, 
$\K_4$ is a $(3,2)$-regular graph, and so by Proposition~\ref{Prop:LexProduct} there exists a $(6,4)$-regular graph,   
and then this produces $(k,4)$-regular graphs for some other pairs of the form $(k,4)$ not already covered. 
\end{remark}

%%%%%%%%%%%%%%%%%%%%%%%%%%%%%%%%%%%%%%%%%%%%%%%
\section{Large  $k$ and $t$ with respect to $k-t$}\label{sec:FC}
%%%%%%%%%%%%%%%%%%%%%%%%%%%%%%%%%%%%%%%%%%%%%%%

Using `$-$' to denote set difference, we first give the following:

\begin{lemma}\label{lemma:Venn}
If $G$ is a $(k,t)$-regular graph and $\{x,y,z\}$ is a triangle in $G$, then  
$$|\N(x)-(\N(y)\cup \N(z))|=|\N(z)-(\N(x)\cup \N(y))|=|\N(y)-(\N(x)\cup \N(z))|.$$ 
\end{lemma}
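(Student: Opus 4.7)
The plan is to use inclusion--exclusion on the three neighbourhoods, together with the observation that the size of a pairwise intersection $|\N(u)\cap \N(v)|$ is determined by the regularity data whenever $uv$ is an edge.

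First I would record the following key fact. If $uv$ is an edge of $G$, then $v\in \N(u)$, and in the local graph $G_u$ the vertex $v$ has exactly $t$ neighbours; these are precisely the elements of $\N(u)\cap \N(v)$ (the vertex $v$ itself does not lie in $\N(v)$ since $G$ is simple). Hence $|\N(u)\cap \N(v)|=t$. Applying this to the three edges $xy$, $yz$, $xz$ of the given triangle yields
\[
|\N(x)\cap \N(y)|=|\N(y)\cap \N(z)|=|\N(x)\cap \N(z)|=t.
\]

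Next, set $a:=|\N(x)\cap \N(y)\cap \N(z)|$. Since
\[
\N(x) = \bigl(\N(x)\setminus(\N(y)\cup \N(z))\bigr)\ \sqcup\ \bigl(\N(x)\cap(\N(y)\cup \N(z))\bigr),
\]
inclusion--exclusion on $\N(x)\cap(\N(y)\cup \N(z))$ gives
\[
k = |\N(x)\setminus(\N(y)\cup \N(z))| + |\N(x)\cap \N(y)| + |\N(x)\cap \N(z)| - a
  = |\N(x)\setminus(\N(y)\cup \N(z))| + 2t - a.
\]
Thus $|\N(x)\setminus(\N(y)\cup \N(z))| = k-2t+a$. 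The analogous computation at $y$ and at $z$ gives the same value $k-2t+a$, because the triple intersection is symmetric in $x,y,z$ and because the three pairwise intersections all have size $t$. This proves the three quantities are equal.

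There is no real obstacle here: the only subtlety is making sure that the sizes of the pairwise intersections $|\N(u)\cap \N(v)|$ really are $t$ for each edge of the triangle, which I would state carefully at the outset, and checking that no vertex of the triangle accidentally lies in its own neighbourhood (which it does not, since $G$ is simple).
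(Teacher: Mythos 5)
Your proof is correct and is essentially the paper's argument: the paper notes that the three neighbourhoods have size $k$ and pairwise intersections of size $t$, and reads off the common value $k-2t+m$ (your $k-2t+a$) from a Venn diagram, which is exactly the inclusion--exclusion computation you carry out. Your write-up simply makes explicit the step that $|\N(u)\cap\N(v)|=t$ for each edge $uv$ of the triangle, which the paper takes as immediate from $(k,t)$-regularity.
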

\begin{proof}
Note that $\N(x)$, $\N(y)$ and $\N(z)$ have the same size $k$, and their pairwise intersections have the same size $t$, 
so the conclusion follows  by considering a Venn diagram for $\N(x) \cup \N(y) \cup \N(z)$. 
\end{proof} 

$$
\begin{picture}(0,150)(20,-92)
{\linethickness{0.7pt}
\put(-30,20){\bigcircle{90}} 
\put(30,20){\bigcircle{90}} 
\put(0,-30){\bigcircle{90}} 
\put(-4.5,1){\makebox(0,0)[bl]{\small $m$}}
\put(-10,21){\makebox(0,0)[bl]{\small $t\!-\!m$}}
\put(-33,-14){\makebox(0,0)[bl]{\small $t\!-\!m$}}
\put(13,-14){\makebox(0,0)[bl]{\small $t\!-\!m$}}
\put(-64,28){\makebox(0,0)[bl]{\small $k\!-\!2t\!+\!m$}}
\put(23,28){\makebox(0,0)[bl]{\small $k\!-\!2t\!+\!m$}}
\put(-20,-48){\makebox(0,0)[bl]{\small $k\!-\!2t\!+\!m$}}
\put(-91,48){\makebox(0,0)[bl]{\small $\N(x)$}}
\put(69,48){\makebox(0,0)[bl]{\small $\N(y)$}}
\put(-10,-90){\makebox(0,0)[bl]{\small $\N(z)$}}

}
\end{picture}
$$

\begin{corollary}\label{cor:Venn2Restated}
If $G$ is a $(k,t)$-regular graph and $x,y,z$ are three distinct vertices of $G$ such that $y,z\in \N(x)$, 
and $\N(x)$ is a subset of $\N(y)\cup \N(z)$,
then \medskip 

\centerline{$\N(y)-\N(x)=\N(z)-\N(x), \  \ \N(x)-\N(y)=\N(z)-\N(y), \ \hbox{ and } \ \N(x)-\N(z)=\N(y)-\N(z).$} 

\end{corollary}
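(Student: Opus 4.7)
The plan is to reduce this to Lemma~\ref{lemma:Venn}. First I would observe that the hypothesis actually forces $\{x,y,z\}$ to be a triangle, even though the statement does not say so explicitly. Indeed, since $y\in \N(x)\subseteq \N(y)\cup \N(z)$ and $y\notin \N(y)$ (the graph is simple), we must have $y\in \N(z)$, so $y\sim z$; combined with $x\sim y$ and $x\sim z$ this makes $\{x,y,z\}$ a triangle.

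Once the triangle is in place, Lemma~\ref{lemma:Venn} applies. The hypothesis $\N(x)\subseteq \N(y)\cup \N(z)$ says that the region of the Venn diagram corresponding to $\N(x)-(\N(y)\cup \N(z))$ is empty, so by Lemma~\ref{lemma:Venn} the analogous ``singleton-only'' regions for $\N(y)$ and $\N(z)$ are also empty. Equivalently, we have the three symmetric containments
\[
\N(x)\subseteq \N(y)\cup \N(z),\qquad \N(y)\subseteq \N(x)\cup \N(z),\qquad \N(z)\subseteq \N(x)\cup \N(y).
\]

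From here the three set equalities follow by a short element-chase, which I would do for one of them and then invoke symmetry. For example, to prove $\N(y)-\N(x)=\N(z)-\N(x)$: if $w\in \N(y)-\N(x)$, then $w\in \N(y)\subseteq \N(x)\cup \N(z)$ and $w\notin \N(x)$ give $w\in \N(z)-\N(x)$; and the reverse inclusion follows identically from $\N(z)\subseteq \N(x)\cup \N(y)$. The other two equalities are obtained by permuting the roles of $x,y,z$ in the same argument, noting that the symmetric containments above are preserved under such permutations.

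There is no real obstacle here; the only step that requires a moment of thought is the initial observation that $\{x,y,z\}$ must already be a triangle, which unlocks Lemma~\ref{lemma:Venn}. Everything else is formal manipulation of the three containments.
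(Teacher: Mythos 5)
Your proposal is correct and follows essentially the same route as the paper: deduce $y\in\N(z)$ from $y\in\N(x)\subseteq\N(y)\cup\N(z)$ and $y\notin\N(y)$ to get the triangle, apply Lemma~\ref{lemma:Venn} to conclude that the other two ``exclusive'' regions of the Venn diagram are also empty, and then read off the three set equalities. The only difference is that you spell out the final Venn-diagram step as an explicit element-chase, which the paper leaves implicit.
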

\begin{proof}
First, $y \in \N(x)$ but $\N(x) - (\N(y) \cup \N(z)) = \emptyset$ and $y \not\in \N(y)$, 
so $y \in ((\N(x) \cap \N(z)  - \N(y))$. In particular, $y \in \N(z)$, and therefore $\{x,y,z\}$ is a triangle in $G$. 
Hence by Lemma~\ref{lemma:Venn}, we find that $\N(y)$ is a subset of $\N(x)\cup \N(z)$, and $\N(z)$ is a subset of $\N(x)\cup \N(y)$, 
and the conclusion again follows from a Venn diagram for $\N(x) \cup \N(y) \cup \N(z)$.  
\end{proof}

Note that the hypothesis that $\N(x) \subseteq \N(y)\cup \N(z)$ is equivalent to there being no vertex 
in $\N(x)$ adjacent to both $y$ and $z$ in the complement $\overline{G_x}$, and hence also equivalent to $\dist_{\overline{G_x}}(y,z)\geq 3$ (where $\dist_{\overline{G_x}}$ denotes the distance function in $\overline{G_x}$). 
This leads us to introduce the following definition.

\begin{definition}
For a graph $G$, we denote by $G^{\geq 3}$ the graph on the same vertex-set as $G$, 
with two vertices $x$ and $y$ adjacent in $G^{\geq 3}$ if and only $\dist_G(x,y) \ge 3$. 
(Note that this includes the possibility that $\dist_G(x,y) = \infty$.)
Also we say that $G$ is \emph{far-connected} if $G^{\geq 3}$ is connected.
\end{definition}

For example, every disconnected graph is far-connected,  
and for $n \ge 3$ the cycle graph $\C_n$ is far-connected if and only if $n \ge 7$.

\begin{lemma}\label{CountingTwoPaths} 
Let $G$ be a connected finite $d$-regular graph with a partition of its vertex set into two sets $X$ and $Y$, 
and let  $\mathcal{P}$ be the set of undirected paths of length at most $2$ in $G$ between $X$ and $Y$. 
Then 
$$|\mathcal{P}|\leq \frac{|X||Y|}{|X|+|Y|}\left(d^2+d+\frac{1}{4}\right).$$

\end{lemma}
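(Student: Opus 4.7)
The plan is to set $a=|X|$ and $b=|Y|$, to write $e$ for the number of edges of $G$ with one endpoint in $X$ and the other in $Y$, and for each vertex $z$ to write $d_X(z)$ and $d_Y(z)$ for the numbers of neighbours of $z$ in $X$ and in $Y$, so that $d_X(z)+d_Y(z)=d$. The length-$1$ paths in $\mathcal{P}$ are exactly the $e$ edges between $X$ and $Y$, while a length-$2$ path in $\mathcal{P}$ has some middle vertex $z$ and contributes $d_X(z)d_Y(z)$ to the count for each such $z$ (regardless of whether $z$ lies in $X$ or $Y$), so that
$$|\mathcal{P}|=e+\sum_{z\in V(G)}d_X(z)d_Y(z).$$

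Next I would bound this sum purely in terms of $a$, $b$, $d$ and $e$. Counting edge-endpoints inside $X$ gives $\sum_{z\in X}d_X(z)=ad-e$, and the Cauchy--Schwarz inequality yields $\sum_{z\in X}d_X(z)^2\geq (ad-e)^2/a$. Substituting $d_Y(z)=d-d_X(z)$ for $z\in X$ then gives
$$\sum_{z\in X}d_X(z)d_Y(z)=d(ad-e)-\sum_{z\in X}d_X(z)^2\leq d(ad-e)-\frac{(ad-e)^2}{a}=\frac{(ad-e)e}{a},$$
and the symmetric computation in $Y$ gives $\sum_{z\in Y}d_X(z)d_Y(z)\leq (bd-e)e/b$. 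Combining these with the length-$1$ count yields
$$|\mathcal{P}|\leq e+\frac{(ad-e)e}{a}+\frac{(bd-e)e}{b}=(2d+1)e-\frac{(a+b)e^2}{ab}.$$

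The right-hand side is a concave quadratic in $e$, maximised when $e=(2d+1)ab/(2(a+b))$; a routine substitution then shows that the maximum value equals $\frac{ab}{a+b}\cdot\frac{(2d+1)^2}{4}=\frac{|X||Y|}{|X|+|Y|}(d^2+d+\frac{1}{4})$, which is the stated bound. The key step is the use of the Cauchy--Schwarz inequality on $X$ and on $Y$ separately, which is what makes the internal degree sequences disappear and leaves an estimate depending only on $a$, $b$, $d$ and $e$; once this is set up correctly, the final optimisation in $e$ is mechanical. Note that neither the connectedness of $G$ nor finiteness beyond that of $X$ and $Y$ is actually used in this argument.
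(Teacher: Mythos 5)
Your proof is correct, and it takes a genuinely different route from the paper's. The paper first reduces to the case where the bipartite graph $H$ of $X$--$Y$ edges is bi-regular, via a switching argument asserting that $|\mathcal{P}|$ is maximised in that case, and then computes $|\mathcal{P}| = mr(1+2d-r-s)$ exactly and maximises the quadratic $u(1+2d-u)$ in $u=r+s$. You instead write $|\mathcal{P}| = e+\sum_z d_X(z)d_Y(z)$, eliminate the internal degree sequences in one stroke by applying Cauchy--Schwarz separately on $X$ and on $Y$, and then maximise the resulting concave quadratic in the number $e$ of crossing edges; both computations land on the same extremal value $\frac{ab}{a+b}\cdot\frac{(2d+1)^2}{4}=\frac{ab}{a+b}\left(d^2+d+\frac14\right)$. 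Your intermediate identities (e.g.\ $d(ad-e)-(ad-e)^2/a=(ad-e)e/a$) check out, as does the path count, since a length-$2$ path between $X$ and $Y$ is determined by its middle vertex $z$ together with one neighbour of $z$ in each part. A point in favour of your version: the paper's switching step requires a vertex $x_k$ adjacent to $x_j$ but not to $x_i$ and a vertex $y_\ell$ adjacent to $x_i$ but not to $x_j$, and verifying that such vertices exist (and that the switches terminate in a bi-regular configuration) takes some care, whereas Cauchy--Schwarz sidesteps this entirely; the paper's reduction, on the other hand, makes the structure of the extremal configurations more transparent. You are also right that connectedness plays no role in either argument, and that finiteness is only needed for the quantities $|X|$ and $|Y|$ to make sense.
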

\begin{proof}
First, let $x_1,x_2,\dots,x_m$ and $y_1,y_2,\dots,y_n$ be the vertices of $X$ and $Y$ respectively.  
Next, for a vertex $x_i$ in $X$, denote by $a_i$ the number of edges in $X$ incident with $x_i$, 
and by $b_i$ the number of edges from $x_i$ to vertices in $Y$, so that $a_i+b_i = d$.  
Similarly, for a vertex $y_i$ in $Y$, denote by $c_i$ the number of edges in $Y$ incident with $y_i$. 
Also let $H$ be the bipartite subgraph of $G$ obtained by deleting all edges within $X$ or within $Y$, 
keeping only the edges between $X$ and $Y$.
We will show that $|\mathcal{P}|$ is maximised when $H$ is bi-regular, 
with all vertices in the same bipart ($X$ or $Y$) having the same valency in $H$.  

Suppose that $H$ is not bi-regular, because (say) $a_i<a_j$ and hence by $d$-regularity also $b_i>b_j$. 
Consider a vertex $x_k\in X - \{x_i\}$ that is adjacent to $x_j$ but not to $x_i$, and a vertex $y_\ell\in Y$ that is adjacent to $x_i$ but not to $x_j$. 
Now replace the edges $\{x_i,y_\ell\}$ and $\{x_k,x_j\}$ by the edges $\{x_i,x_k\}$ and $\{x_j,y_\ell\}$. 
Then the valencies in $G$ of all vertices are preserved, and the number of paths in $\mathcal{P}$ (of length at most two between $X$ and $Y$) 
that we lose is equal to $(1+a_i+c_\ell)+(b_j+b_k)$, while the number of such paths that we gain is equal to $(1+a_j+c_\ell)+(b_i+b_k)$, 
and hence the net gain is $a_j+b_i-(a_i+b_j) = (a_j-a_i) + (b_i-b_j) > 0$. 
Thus $|\mathcal{P}|$ is maximised when $H$ is bi-regular.

Now suppose that $H$ is bi-regular, with vertices from $X$ having valency $r$ (in $H$), 
and vertices from $Y$ having valency $s$ (in $H$), where $1 \leq r,s \leq d$. 
Then in $G$ the number of paths of length $1$ from $X$ to $Y$ is $mr=ns$, 
while the number of paths of length $2$ from $X$ to $Y$ is $m(d-r)r+n(d-s)s$, so 
$$|\mathcal{P}| = mr+m(d-r)r+n(d-s)s = mr+m(d-r)r+m(d-s)r = mr(1+2d-r-s).$$

Letting $u = r+s$, this gives
$$\frac{(m+n)|\mathcal{P}|}{mn} = \frac{|\mathcal{P}|}{m}+\frac{|\mathcal{P}|}{n} = r(1+2d-r-s)+s(1+2d-r-s) = u(1+2d-u),$$ 
and if we think of $d$ as being fixed, and $u$ as a variable, then this is a quadratic function 
that attains its maximum of $\frac{(2d+1)^2}{4}=d^2+d+\frac{1}{4}$ when $u=\frac{2d+1}{2}$. 

It follows that \quad $\frac{(|X|+|Y|)|\mathcal{P}|}{|X||Y|} = \frac{(m+n)|\mathcal{P}|}{mn} \leq d^2+d+\frac{1}{4}$, \quad as required. 
\end{proof}
\vskip 3pt 

\begin{corollary}\label{cor:far-connected}
A $d$-regular graph with order greater than $d^2+d$ is far-connected. 
\end{corollary}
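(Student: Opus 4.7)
The plan is to prove the contrapositive: if a $d$-regular graph $G$ is not far-connected, then $|V(G)| \leq d^2+d$. Since every disconnected graph is far-connected by definition, we may assume $G$ is connected. Because $G^{\geq 3}$ is disconnected, there is a partition $V(G) = X \sqcup Y$ into two nonempty sets with no edge of $G^{\geq 3}$ joining them; equivalently, $\dist_G(x,y) \leq 2$ for every $x \in X$ and every $y \in Y$.

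The key step is then to count, in two ways, the set $\mathcal{P}$ of paths of length at most $2$ in $G$ between $X$ and $Y$ that appears in Lemma~\ref{CountingTwoPaths}. For the lower bound, each pair $(x,y) \in X \times Y$ is joined by at least one such path, and distinct pairs give rise to distinct elements of $\mathcal{P}$ (since a path of length at most $2$ between $X$ and $Y$ has a unique pair of endpoints, one in each part). Hence $|\mathcal{P}| \geq |X|\,|Y|$. For the upper bound, Lemma~\ref{CountingTwoPaths} gives
$$|\mathcal{P}| \leq \frac{|X|\,|Y|}{|X|+|Y|}\left(d^2+d+\frac{1}{4}\right).$$
Combining these two bounds and cancelling the common factor $|X|\,|Y|$ yields $|X|+|Y| \leq d^2+d+\frac{1}{4}$, and since $|X|+|Y| = |V(G)|$ is a positive integer, we conclude $|V(G)| \leq d^2+d$, contradicting the hypothesis.

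The bulk of the work has already been done in Lemma~\ref{CountingTwoPaths}, so I do not anticipate any genuine obstacle in executing this plan. The only things to be careful about are the reduction to the connected case (needed to invoke the lemma) and the observation that distinct ordered pairs $(x,y) \in X \times Y$ indeed contribute distinct paths to $\mathcal{P}$, both of which are essentially immediate.
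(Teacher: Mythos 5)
Your proposal is correct and follows essentially the same route as the paper: pass to a partition $X\sqcup Y$ witnessing the failure of far-connectedness, bound $|\mathcal{P}|$ below by $|X||Y|$ and above via Lemma~\ref{CountingTwoPaths}, and use integrality to drop $d^2+d+\frac{1}{4}$ to $d^2+d$. The only point worth adding is that the lemma also requires $G$ to be finite, which follows since the partition forces $G$ to have diameter at most $4$ and $G$ has finite valency.
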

\begin{proof}
Let $G$ be a $d$-regular graph that is not far-connected. 
Then we can partition its vertex set into two non-empty subsets, say $X$ and $Y$, such that $d(x,y)\leq 2$ for every $x\in X$ and $y\in Y$. This implies that $G$ must be finite.
Now let $\mathcal{P}$ be the set of undirected paths of length at most $2$ in $G$ between $X$ and $Y$.   
Then by Lemma~\ref{CountingTwoPaths}, we know that $|\mathcal{P}|\leq \frac{|X||Y|}{|X|+|Y|}\left(d^2+d+\frac{1}{4}\right)$. 
Moreover, because $d(x,y)\leq 2$ for all $(x,y) \in X \times Y$, we have $|X||Y|\leq |\mathcal{P}|$ and thus $|X|+|Y|\leq d^2+d+\frac{1}{4}$. 
But $|X|+|Y|$  is an integer, and so $|X|+|Y|\leq d^2+d$.
\end{proof}

The following example family shows that the above bound is the best possible for every valency $d \geq 1$.

\begin{example}\label{MDEConder}
For fixed $d\geq 1$, let  $V=\{v_i: i\in\{1,\ldots,d\}\}$ and $W=\{w_{ij}: i,j\in \{1,\ldots,d\}\}$, 
and define $G$ as the bipartite graph on the vertex set $V\cup W$ in which $v_i$ adjacent to $w_{ik}$ for every $k$, and $w_{ik}$ adjacent to $w_{jk}$ for every $i\neq j$. 
Then $G$ has order $d^2+d$ and is $d$-regular. By considering $2$-paths of the form $(v_i,w_{ik},w_{jk})$, however, 
we see that vertices from different parts of $G$ lie at distance at most $2$ from each other, and so $G$ is not far-connected. 
\end{example}

Next, we say that vertices $x$ and $y$ are \emph{twins} if they have the same neighbourhood, that is, if $\N(x)=\N(y)$.
Note that here we include the possibility that $x = y$, which is not always assumed in the definition of twins.

\begin{proposition}\label{prop:TwinCount}
Let $G$ be a $(k,t)$-regular graph and let $x$ be a vertex of $G$ such that $\overline{G_x}$ is far-connected. Then $x$ has exactly $k-t$ twins in $G$. 
\end{proposition}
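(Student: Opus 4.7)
The plan is to identify the set of twins of $x$ with a specific subset $T$ of the vertex set of $G$ of size $k-t$. More precisely, I will show that $\N(y)-\N(x)$ takes the same value $T$ for every $y\in \N(x)$, that $|T|=k-t$, and that $T$ is precisely the set of twins of $x$.

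For the first step, I plan to apply Corollary~\ref{cor:Venn2Restated} repeatedly along a path in $(\overline{G_x})^{\geq 3}$. As observed in the discussion just before the definition of far-connectedness, for any two distinct $y,z\in \N(x)$ the hypothesis of that corollary holds exactly when $\dist_{\overline{G_x}}(y,z)\geq 3$, and its conclusion then includes the equality $\N(y)-\N(x)=\N(z)-\N(x)$. Since $\overline{G_x}$ is far-connected, any two vertices of $\N(x)$ are joined by a path in $(\overline{G_x})^{\geq 3}$, along which these equalities chain together to give a single common set $T$.

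The cardinality of $T$ is then read off directly: for any $y\in \N(x)$, the set $\N(y)$ is the disjoint union of $\N(y)\cap \N(x)$ and $T=\N(y)-\N(x)$; the first piece has size $t$ because $y$ has valency $t$ in the $t$-regular local graph $G_x$, and $\N(y)$ itself has size $k$ by $k$-regularity of $G$, so $|T|=k-t$.

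It remains to identify $T$ with the set of twins of $x$. Each $z\in T$ lies in $\N(y)$ for every $y\in \N(x)$, so $\N(x)\subseteq \N(z)$; since both sides have size $k$ this is an equality, and $z$ is a twin of $x$. Conversely, if $w$ is a twin of $x$ then $\N(w)=\N(x)$, which forces both that $w$ is adjacent to every $y\in \N(x)$ and that $w\notin \N(x)$ (since $x\notin \N(w)$), so $w\in \N(y)-\N(x)=T$ for any $y\in\N(x)$. The key idea is simply to use far-connectedness as a chaining mechanism for Corollary~\ref{cor:Venn2Restated}, and I do not foresee any significant obstacle beyond keeping the correspondence between the Venn-type hypothesis and adjacency in $(\overline{G_x})^{\geq 3}$ straight.
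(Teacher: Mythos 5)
Your proposal is correct and follows essentially the same route as the paper: chain the equality $\N(y)-\N(x)=\N(z)-\N(x)$ along a path in $(\overline{G_x})^{\geq 3}$ using Corollary~\ref{cor:Venn2Restated}, then identify this common set with the set of twins of $x$ and count it as $k-t$. The only (harmless) difference is that you spell out the cardinality computation $|\N(y)-\N(x)|=k-t$, which the paper leaves implicit.
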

\begin{proof}
We first show that $\N(y)-\N(x)=\N(z)-\N(x)$ for all $y,z\in \N(x)$.  
Because $\overline{G_x}$ is far-connected, there exists a sequence $(v_1,v_2,\ldots,v_n)$ from $v_1 = y$ to $v_n = z$ 
with $v_i\in \V(G_x)$ for all $i$, and $\dist_{\overline{G_x}}(v_i,v_{i+1})\geq 3$ for all $i < n$. By the comments 
following Corollary~\ref{cor:Venn2Restated}, we see that $\N(x)$ is a subset of $\N(v_i)\cup \N(v_{i+1})$ for all $i < n$, 
and hence by repeated application of Corollary~\ref{cor:Venn2Restated} it follows that 
$\N(y)-\N(x) = \N(v_1)-\N(x) = \N(v_2)-\N(x) = \dots = \N(v_n)-\N(x) = \N(z)-\N(x)$.

Now let $x'\in \N(y)-\N(x)$, for some $y \in \N(x)$. Then by the previous paragraph, $x' \in \N(z) - \N(x)$ for every $z \in \N(x)$, 
and so every $z \in \N(x)$ lies in $\N(x')$. Thus $\N(x) \subseteq \N(x')$, and then since $G$ is regular, $|\N(x)| = |\N(x')|$, 
so $\N(x) =  \N(x')$, and therefore $x'$ is a twin of $x$. 
Conversely, if $x'$ is a twin of $x$, then $y \in \N(x) = \N(x')$, but $x \not\in \N(x) = \N(x')$ and so $x' \not\in \N(x)$, 
and therefore $x' \in \N(y)-\N(x)$. 
It follows that the set of twins of $x$ is exactly $\N(y)-\N(x)$, which has size $k-t$.
\end{proof}

\begin{corollary}\label{cor:CharCompleteMulti}
Let $G$ be a connected $(k,t)$-regular graph such that $\overline{G_x}$ is far-connected for all $x\in \V(G)$. 
Then $k-t$ divides $t$, and hence also divides $k$, and $G$ is isomorphic to the complete multipartite graph $\K_{\frac{2k-t}{k-t}}[\overline{\K_{k-t}}]$.  
\end{corollary}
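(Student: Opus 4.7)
The plan is to leverage Proposition~\ref{prop:TwinCount}: under the given hypothesis, every vertex $x$ has exactly $d := k-t$ twins, so the relation ``$u$ and $v$ have the same neighbourhood'' partitions $\V(G)$ into equivalence classes (\emph{twin classes}) each of size $d$. I would then record two elementary consequences of the definition: twins are non-adjacent (since $x \notin \N(x) = \N(x')$), and adjacency between distinct twin classes is all-or-nothing (if $x \sim y$ and $y' \in [y]$, then $x \in \N(y) = \N(y')$, so $x \sim y'$). Hence $G$ is obtained from a quotient graph $Q$, with vertices being the twin classes and adjacency inherited, by blowing up each vertex into $\overline{\K_d}$.

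Next I would observe that for any $x$, the neighbourhood $\N(x)$ is a disjoint union of complete twin classes: if $y \in \N(x)$ and $y' \in [y]$, then $x \in \N(y) = \N(y')$, so $y' \in \N(x)$. This immediately gives $d \mid k$, whence $d \mid t = k-d$. Writing $k = ad$, the neighbourhood $\N(x)$ consists of exactly $a$ twin classes, say $[y_1], \ldots, [y_a]$, none of which equals $[x]$ since twins are non-adjacent.

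The main step is to pin down the local and global structure. For each $y_i \in \N(x)$, we have $|\N(y_i) \cap \N(x)| = t = (a-1)d$ by $(k,t)$-regularity. Within $\N(x)$, the vertex $y_i$ can only be adjacent to full twin classes other than $[y_i]$, of which there are $a-1$; the all-or-nothing principle therefore forces $y_i$ to be adjacent to all vertices in all of them, so $G_x \cong \K_a[\overline{\K_d}]$. Moreover, $\N(y_i)$ consists of $[x]$ together with the other $a-1$ twin classes in $\N(x)$, which already accounts for all $ad = k$ of its neighbours. In $Q$, the set $\{[x], [y_1], \ldots, [y_a]\}$ therefore induces a copy of $\K_{a+1}$, and every vertex in this set has all of its $Q$-neighbours inside it. Since $G$ is connected, so is $Q$, and thus $Q = \K_{a+1}$ and $G \cong \K_{a+1}[\overline{\K_d}]$. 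Substituting $d = k-t$ and $a+1 = (k+d)/d = (2k-t)/(k-t)$ then yields the claimed isomorphism.

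I do not anticipate a serious obstacle here: Proposition~\ref{prop:TwinCount} already carries out the analytic work, and what remains is careful bookkeeping with the twin partition. The subtle points worth double-checking are that $[x]$ is disjoint from each $[y_i]$ (which follows from twins being non-adjacent) and that no further twin class can appear outside $\{[x], [y_1], \ldots, [y_a]\}$; the latter is guaranteed by the valency count on $[x]$ together with the connectedness of $G$.
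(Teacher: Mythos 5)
Your proposal is correct and follows essentially the same route as the paper: invoke Proposition~\ref{prop:TwinCount} to get twin classes of size $k-t$, pass to the quotient by the twin relation (so $G$ is a lexicographic product $Q[\overline{\K_{k-t}}]$), and show the quotient is complete. The paper compresses your final counting step into the observation that the quotient is a connected $\left(\frac{k}{k-t},\frac{k}{k-t}-1\right)$-regular graph and hence a complete graph, but the underlying argument is the same bookkeeping you carry out explicitly.
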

\begin{proof}
By Proposition~\ref{prop:TwinCount}, every vertex of $G$ has exactly $k-t$ twins. 
It follows easily that $G$ is isomorphic to the lexicographic product $H[\overline{\K_{k-t}}]$ for some connected $\left(\frac{k}{k-t},\frac{t}{k-t}\right)$-regular graph $H$. In particular, $k-t$ divides $k$ and $t$. 
But also $\frac{k}{k-t} = \frac{t+k-t}{k-t} = \frac{t}{k-t}+1$, 
and hence $H$ is a complete graph with valency $\frac{k}{k-t}$ and order $\frac{k}{k-t}+1 =  \frac{k+k-t}{k-t} =  \frac{2k-t}{k-t}$. 
Thus $H$ is isomorphic to $\K_{\frac{2k-t}{k-t}}$.
\end{proof}
\vskip 2pt

\renewcommand*{\proofname}{\textbf{Proof of Theorem~\ref{prop:diagonal}}}
%\textbf{Proof of Theorem~\ref{prop:diagonal}.} 
%Let $d\geq 1$ and let $k\geq d^2-d+1$.  Let $G$ be a connected $(k,k-d)$-regular graph and let $x$ be a vertex of $G$. 
\begin{proof}
Let $G$ be a connected $(k,k-d)$-regular graph, where $d\geq 1$ and $k\geq d^2-d+1$, and let $x$ be a vertex of $G$. 
Since $G_x$ has valency $k-d$,  its complement $\overline{G_x}$ has order $k$ and valency $k-(k-d)-1 = d-1$. 
Also $k\geq d^2-d+1 > (d-1)^2+(d-1)$, and so by Corollary~\ref{cor:far-connected} we find that $\overline{G_x}$ is far-connected,  
and then it follows from Corollary~\ref{cor:CharCompleteMulti} with $t = k-d$ that $d$ divides $k$, and $G$ is isomorphic to the complete multipartite graph $\K_{\frac{k+d}{d}}[\overline{\K_{d}}]$.  
\end{proof}

%%%%%%%%%%%%%%%%%%%%%%%%%%
\section{Small values of $k$ and $t$}\label{sec:smallcases}
%%%%%%%%%%%%%%%%%%%%%%%%%%

Here we give some `sporadic' examples of $(k,t)$-regular graphs for small $k$ and $t$ not covered by our earlier observations, 
followed by a table that summarises our knowledge regarding Question~\ref{BigQuestion} for for $1\leq k \leq 16$ and $0\leq t\leq 14$. 

The sporadic examples include a well-known and obvious graph, 
namely the 1-skeleton of the icosahedron, plus two well-known examples that are strongly regular, 
and some others that are vertex-transitive and easily constructible using elementary group theory.  
The latter were found with the help of the transitive groups database in {\sc Magma} \cite{magma}.

\begin{example}\label{5-2}
There is just one connected $(5,2)$-regular graph: the local graphs must be $\C_5$, and then the only connected example is the $12$-vertex icosahedral graph, by \cite[Proposition 1.1.4]{BCN}. 
\end{example} 

\begin{example}\label{10-6}
The complement of the 5-valent Clebsch graph of order 16 is a connected $(10,6)$-regular graph, 
and in fact is strongly regular with parameters $(16,10,6,6)$; see \cite[p. 104]{BCN}. 
\end{example} 

\begin{example}\label{13-6}
The following construction produces a $(13,6)$-regular graph of order $28$. 
The group $\PSL(2,13)$ has a single conjugacy class of subgroups of order $39$ (isomorphic to the metacyclic group $\C_{13}\rtimes_3 \C_3$), 
and in the natural action of $\PSL(2,13)$ on the $28$ cosets of one of them, the sub-orbits (namely the orbits of the point-stabiliser) 
consist of two of length $1$ and two of length $13$, all of which are self-paired.  
By taking one of the two sub-orbits of length $13$, we can define the edges of an arc-transitive graph that is $(13,6)$-regular, 
with $\PSL(2,13) \times \C_2$ as its automorphism group.
\end{example} 

\begin{example}\label{14-5}
In the natural transitive action of $\Sym(5)$ on the $30$ cosets of a subgroup of order 4 generated by two disjoint transpositions, there are seven sub-orbits of lengths $1, 1, 2, 2, 4, 4$ and $4$ that are self-paired, 
and four of lengths $2, 2, 4$ and $4$ that are not self-paired.  By taking the two non-self-paired orbits of length $4$ and one of the two self-paired orbits of length $2$, 
and a carefully chosen one of the three self-paired orbits of length $4$, we can define the edges of a vertex-transitive graph of order $30$ that is $(14,5)$-regular, 
with $\Sym(5)$ as its automorphism group.
\end{example} 

\begin{example}\label{14-8}
The group $\PGL(2,7)$ has a single conjugacy class of (dihedral) subgroups of order $14$, 
and in its natural action on the $24$ cosets of one of them, 
there are three sub-orbits of length $1$ and three self-paired sub-orbits of length $7$.  
By taking the union of any two of the three sub-orbits of length $7$, we can define the edges of a vertex-transitive graph of order $24$ that is $(14,8)$-regular, 
with automorphism group $\PGL(2,7)$.  In fact, this graph is a Cayley graph for $\Sym(4)$. 
\end{example} 

\begin{example}\label{15-8}
If $a$ and $b$ are generators of orders $2$ and $12$ for the group $D = \C_2 \times \C_{12}$,  
then the complement of the 8-valent Cayley graph for $D$ on the generating set $\{a,b^3,b^4,b^6,ab^4\}$ 
is an arc-transitive graph that is $(15,8)$-regular, with automorphism group of order $17280$. 
\end{example} 

\begin{example}\label{16-10}
The 16-valent Schl\"afli graph of order 27 (naturally associated with the 27 lines on a cubic surface) is a $(16,10)$-regular graph, 
and in fact is strongly regular with parameters $(27,16,10,8)$; see \cite[p.103]{BCN}. 
\end{example} 

We can now present the table, followed by some guidance on how to understand it.
\smallskip

\begin{table}[ht]
\begin{tabular}{ |c|c|c|c|c|c|c|c|c|c|c|c|c|c|c|c| }
\hline
 $(k,t)$ &0& 1 & 2 &3 & 4 & 5&6 & 7 & 8&9 & 10 & 11&12 & 13 & 14\\ \hline
 1 &$\small {\bf \K_2}$& {\color{red}{\color{red}$\times$}} & {\color{red}$\times$} & {\color{red}$\times$} & {\color{red}$\times$} & {\color{red}$\times$} & {\color{red}$\times$} & {\color{red}$\times$} & {\color{red}$\times$} & {\color{red}$\times$} & {\color{red}$\times$} & {\color{red}$\times$} & {\color{red}$\times$} & {\color{red}$\times$} & {\color{red}$\times$} \\\hline
2 &{\small CP}& $\small {\bf \K_3}$ & {\color{red}$\times$} & {\color{red}$\times$} & {\color{red}$\times$} & {\color{red}$\times$} & {\color{red}$\times$} & {\color{red}$\times$} & {\color{red}$\times$} & {\color{red}$\times$} & {\color{red}$\times$} & {\color{red}$\times$} & {\color{red}$\times$} & {\color{red}$\times$} & {\color{red}$\times$} \\\hline
3 &{\small CP}& {\color{red}$\times$} & $\small {\bf \K_4}$ & {\color{red}$\times$} & {\color{red}$\times$} & {\color{red}$\times$} & {\color{red}$\times$} & {\color{red}$\times$} & {\color{red}$\times$} & {\color{red}$\times$} & {\color{red}$\times$} & {\color{red}$\times$} & {\color{red}$\times$} & {\color{red}$\times$} & {\color{red}$\times$} \\\hline
4 &{\small CP}& {\small CP}&{\bf {\small LP}}& $\small {\bf \K_5}$ & {\color{red}$\times$} & {\color{red}$\times$} & {\color{red}$\times$} & {\color{red}$\times$} & {\color{red}$\times$} & {\color{red}$\times$} & {\color{red}$\times$} & {\color{red}$\times$} & {\color{red}$\times$} & {\color{red}$\times$} & {\color{red}$\times$} \\\hline
5 &{\small CP}& {\color{red}$\times$}&{\bf {\color{black}${\ref{5-2}}$}}& {\color{red}$\times$}& $\small {\bf \K_6}$ & {\color{red}$\times$} & {\color{red}$\times$} & {\color{red}$\times$} & {\color{red}$\times$} & {\color{red}$\times$} & {\color{red}$\times$} & {\color{red}$\times$} & {\color{red}$\times$} & {\color{red}$\times$} & {\color{red}$\times$} \\\hline
6 &{\small CP}& {\small CP}&{\small CP}&{\small LP} & {\bf \small LP} & $\small {\bf \K_7}$& {\color{red}$\times$} & {\color{red}$\times$} & {\color{red}$\times$} & {\color{red}$\times$} & {\color{red}$\times$} & {\color{red}$\times$} & {\color{red}$\times$} & {\color{red}$\times$} & {\color{red}$\times$} \\\hline
7 &{\small CP}& {\color{red}$\times$}&{\small CP}& {\color{red}$\times$} &{\color{red}$\times_D$}& {\color{red}$\times$}& $\small {\bf \K_8}$& {\color{red}$\times$} & {\color{red}$\times$} & {\color{red}$\times$} & {\color{red}$\times$} & {\color{red}$\times$} & {\color{red}$\times$} & {\color{red}$\times$} & {\color{red}$\times$} \\\hline
8 &{\small CP}&{\small CP}& {\small CP}&{\small CP}& {\small LP}&{\color{red}$\times_D$}& {\bf {\small LP}}& $\small {\bf \K_9}$ & {\color{red}$\times$} & {\color{red}$\times$} & {\color{red}$\times$} & {\color{red}$\times$} & {\color{red}$\times$} & {\color{red}$\times$}  & {\color{red}$\times$} \\\hline
9 &{\small CP}&{\color{red}$\times$}& {\small CP}&{\color{red}$\times$}& {\small TP}&{\color{red}$\times$}& {\bf \small LP}&{\color{red}$\times$}& $\small {\bf \K_{10}}$& {\color{red}$\times$} & {\color{red}$\times$} & {\color{red}$\times$} & {\color{red}$\times$} & {\color{red}$\times$}  & {\color{red}$\times$}  \\\hline
10 &{\small CP}&{\small CP}& {\small CP}&{\small CP}& {\small CP}&{\small LP}& {\color{black}${\ref{10-6}}$}&{\color{red}$\times_D$}& {\bf \small LP}&$\small {\bf \K_{11}}$& {\color{red}$\times$}& {\color{red}$\times$} & {\color{red}$\times$} & {\color{red}$\times$} & {\color{red}$\times$}   \\\hline
11 &{\small CP}&{\color{red}$\times$}& {\small CP}&{\color{red}$\times$}& {\small CP} &{\color{red}$\times$}& {\color{blue}??}&{\color{red}$\times$}& {\color{red}$\times_D$} &{\color{red}$\times$}&$\small {\bf \K_{12}}$ & {\color{red}$\times$} & {\color{red}$\times$} & {\color{red}$\times$} & {\color{red}$\times$}  \\\hline
12 &{\small CP}&{\small CP}& {\small CP}&{\small CP}&{\small CP}&{\small CP}& {\small LP}&{\color{blue}??}& {\small LP}&{\bf \small LP}& {\bf \small LP}& $\small {\bf \K_{13}}$ & {\color{red}$\times$} & {\color{red}$\times$} & {\color{red}$\times$} \\\hline
13 &{\small CP}&{\color{red}$\times$}& {\small CP}&{\color{red}$\times$}& {\small CP} &{\color{red}$\times$}& {\color{black}${\ref{13-6}}$}&{\color{red}$\times$}& {\color{blue}??} &{\color{red}$\times$}& {\color{red}$\times_D$}& {\color{red}$\times$}& $\small {\bf \K_{14}}$ & {\color{red}$\times$} & {\color{red}$\times$}  \\\hline
14 &{\small CP}&{\small CP}& {\small CP}&{\small CP}& {\small CP} &{\color{black}${\ref{14-5}}$}& {\small CP}&{\small LP}& {\color{black}${\ref{14-8}}$} &{\color{blue}??}& {\color{red}$\times_D$}&{\color{red}$\times_D$}& {\bf \small LP} & $\small {\bf \K_{15}}$& {\color{red}$\times$} \\\hline
15 &{\small CP}&{\color{red}$\times$}& {\small CP}&{\color{red}$\times$}& {\small CP} &{\color{red}$\times$}& {\small CP}&{\color{red}$\times$}& {\color{black}${\ref{15-8}}$} &{\color{red}$\times$}& {\small LP}&{\color{red}$\times$}&{\bf \small LP} & {\color{red}$\times$}&$\small {\bf \K_{16}}$ \\\hline
16 &{\small CP}&{\small CP}& {\small CP}&{\small CP}& {\small CP} &{\small CP}& {\small CP}&{\small CP}& {\small LP} &{\small TP}& {\color{black}${\ref{16-10}}$}&{\color{blue}??}& {\bf \small LP}  & {\color{red}$\times_D$} & {\bf \small LP} \\\hline

\end{tabular}
\medskip
\caption{Small $k$ and $t$}\label{table}
\end{table}

${}$\\[-42pt] 

The rows are indexed by $k$ and the columns are indexed by $t$, and the $(k,t)$th entry gives our best knowledge of the existence or non-existence of a $(k,t)$-regular graph.

An entry marked $\times_D$ or $\times$ indicates non-existence due to Theorem~\ref{prop:diagonal} or Proposition~\ref{Prop:Basic}, respectively, 
while an entry marked CP, LP or TP indicates that an example can be constructed as a non-trivial Cartesian, lexicographic or tensor product, as in Propositions~\ref{Prop:CartesianProduct}, \ref{Prop:LexProduct} or \ref{Prop:TensorProduct}, respectively,  
and an entry such as `5.1' indicates one of the sporadic examples described at the beginning  of this section.  
Next, an entry in bold (including ${\bf K}_n$ for some $n$) indicates that the relevant example is unique amongst connected graphs. 
(In these cases, uniqueness follows from Theorem~\ref{prop:diagonal}, apart from the well-known case of $(k,t)=(5,2)$ in Example~\ref{5-2}.) 
Finally, a pair of question marks indicates that the existence of a $(k,t)$-regular graph is not currently known to us.

%%%%%%%%%%%%%%%%%%%%%%
\section{Further questions}\label{sec:conclusion}
%%%%%%%%%%%%%%%%%%%%%%

%{\color{red}[Also need to mention these:]} Clebsch ({\color{green} Cleb}) \cite[p. 104]{BCN}, Schl\"afli ({\color{green} Schl\"a}) \cite[p.103]{BCN} , {\color{red} Describe the others}

Finally, we pose some questions that arise naturally from this work:
\smallskip

\noindent
(1) Does there exist a $(11,6)$-regular graph? (This is the smallest open case.) 
\\[+4pt] 
(2) Are Theorem~\ref{prop:diagonal} and Proposition~\ref{Prop:Basic} the only necessary conditions for the existence of \\ ${}$ \quad \ a $(k,t)$-regular graph?
\\[+4pt] 
(3) For which pairs $(k,t)$ is there a unique connected $(k,t)$-regular graph? \\ ${}$ \quad \ (The only such pairs we know of are $(5,2)$ and the ones given by Theorem~\ref{prop:diagonal}.)
\\[+4pt] 
(4) Is there a pair $(k,t)$ such that there is more than one connected $(k,t)$-regular graph, \\ ${}$ \quad \ but every such graph is finite?
\\[+4pt] 
(5) Is there a pair $(k,t)$ such that there exists an infinite $(k,t)$-regular graph but no finite one?

\bigskip

\begin{center} 
{\sc Acknowledgements} 
\end{center} 
\smallskip

We are grateful to the N.Z. Marsden Fund (via grants UOA1824 and UOA2030) and the University of Auckland 
(via FRDF project 3719917) for supporting the research reported in this paper.   
We also acknowledge the help of the {\sc Magma} system \cite{magma} in our search for examples and investigation of their properties.

\end{document}